\newtheorem{thm}{\bfseries Theorem}
\newtheorem{cor}[thm]{\bfseries Corollary}     
\newtheorem{cl}[thm]{\bfseries Claim}
\newenvironment{proof}{\medskip \noindent{\sc Proof:}}{\quad$\Box$\par\medskip} 
\DeclareMathOperator{\opt}{opt}
\DeclareMathOperator{\supp}{supp}
\DeclareMathOperator{\diam}{diam}
\title{Optimal pebbling and rubbling of graphs with given diameter }
\author[1,2]{Ervin Gy\H{o}ri\thanks{gyori.ervin@renyi.mta.hu}}
\author[3,4]{Gyula Y. Katona\thanks{kiskat@cs.bme.hu}}
\author[3]{L\'aszl\'o F. Papp\thanks{lazsa@cs.bme.hu}}
\affil[1]{Alfr\'ed R\'enyi Institute of Mathematics, Budapest, Hungary}
\affil[2]{Department of Mathematics, Central European University, Budapest, Hungary}
\affil[3]{Department of Computer Science and
Information Theory, Budapest University of Technology and Economics, Hungary}
\affil[4]{MTA-ELTE Numerical Analysis and Large Networks Research Group, Hungary}
\begin{document}

\maketitle

\begin{abstract}
 A pebbling move on a graph removes two pebbles from a vertex and adds
  one pebble to an adjacent vertex. A vertex is reachable
  from a pebble distribution if it is possible to move a pebble to
  that vertex using pebbling moves. The optimal pebbling number $\pi_{\opt}$ is
  the smallest number $m$ needed to guarantee a pebble distribution of
  $m$ pebbles from which any vertex is reachable. A rubbling move is similar to 
  a pebbling move, but it can remove the two pebbles from two different vertex.
  The optimal rubbling number $\rho_{\opt}$ is defined analogously to the optimal pebbling number. 
  In this paper we give lower bounds
  on both the optimal pebbling and rubbling numbers by the distance $k$ domination number. 
  With this bound we prove that for each $k$ there is a graph $G$ with diameter $k$ such that 
  $\rho_{\opt}(G)=\pi_{\opt}(G)=2^k$.


\end{abstract}

\begin{quote}
{\bf Keywords: graph pebbling, rubbling, diameter, distance domination}
\end{quote}
\vspace{5mm}

\section{Introduction} 

Graph pebbling is a game on graphs initialized by a question of Saks and Lagarias, which was answered by Chung in 1989 \cite{chung}. Its roots are originated in number theory. 

Each graph in this paper is simple and connected. We denote the vertex set and the edge set of graph $G$ with $V(G)$ and $E(G)$, respectively. The distance between vertices $u$ and $v$, denoted by $d(u,v)$, is the minimum number of edges contained in the shortest path connecting $u$ and $v$. We use $\diam(G)$ for the diameter of $G$. 

We write $G\square H$ for the Cartesian product of graphs $G$ and $H$. The vertex set of graph ${G\square H}$ is $V(G)\times V(H)$ and vertices $(g,h)$ and $(g',h')$ are adjacent if and only if either $g=g'$ and ${\{h,h'\}\in E(H)}$, or $h=h'$ and $\{g,g'\}\in E(G)$. 
We use $G^{\square d}$ as an abbreviation of $G\square G \square \dots \square G$ where $G$ appears exactly $d$ times.  
 
 A \emph{pebble distribution} $P$ on graph $G$ is a function mapping the vertex set to nonnegative integers. We can imagine that each vertex $v$ has $P(v)$  pebbles. A \emph{pebbling move} removes two pebbles from a vertex and places one to an adjacent one. A pebbling move is \emph{allowed} if and only if the vertex loosing pebbles has at least two pebbles. 

A sequence of pebbling moves is called \emph{executable} if for any $i$ the $i$th move is allowed under the the distribution obtained by the application of the first $i-1$ move. The pebble distribution which we get from $P$ after the execution of the sequence of pebbling moves $\sigma$ is denoted by $P_{\sigma}$.
 
 A vertex $v$ is \emph{reachable} under a distribution $P$, if there is an executable sequence of pebbling moves $\sigma$, such that $P_{\sigma}(v)\geq 1$. We say that a distribution $P$ is \emph{solvable} if each vertex is reachable under $P$. 
The size of a pebble distribution $P$ is $\sum_{v\in V(G)} P(v)$ which we denote by $|P|$.
A pebble distribution $P$ on a graph $G$ will be called {\it optimal} if it is solvable and 
its size is the smallest possible. The size of an optimal pebble distribution is called \emph{the optimal pebbling} number and denoted by $\pi_{opt}(G)$.

In \cite{rubbling} the authors invented a version of pebbling called rubbling. The only difference between the definitions of pebbling and rubbling is that there is an additional available move. A \emph{strict rubbling move} removes two pebbles in total but it takes them from two different vertices then it places one pebble at one of their common neighbours. Thus a strict rubbling move is allowed if it removes pebbles from vertices who share a neighbour and both of them has a pebble. 
A rubbling move is either a pebbling move or a strict rubbling move. 
If we replace pebbling moves with rubbling moves everywhere in the definition of the optimal pebbling number, then we obtain the \emph{optimal rubbling number}, which is denoted by $\rho_{\opt}$. 

There are not  many results on rubbling, only two articles \cite{rub1,rub2} appeared about rubbling so far. On the other hand, the optimal pebbling number of several graph families are known. For example exact values were given for paths and cycles \cite{path1,path2}, ladders \cite{ladder}, caterpillars \cite{caterpillar}, m-ary trees \cite{m-ary} and staircase graphs \cite{stairs}. 
However, determining the optimal pebbling number for a given graph is NP-hard \cite{NPhard}. There are also some known bounds on the optimal pebbling number. One of the earliest is that
$\pi_{\opt}(G)\leq 2^{\diam(G)}$.

Placing $2^{\diam(G)}$ pebbles to a single vertex always creates a solvable distribution, but usually much less pebbles are enough to construct a solvable distribution. It is a natural question, if there are graphs with arbitrary large diameter where this amount of pebbles is required for an optimal pebbling?

The answer is positive and it was given in \cite{smalldiam}. However, the proof  in \cite{smalldiam} is incorrect. The authors gave a set of graphs and claimed that they have this property, but we will show during the proof of Claim \ref{kevesebb} that it is not true. 

Herscovici \emph{et al.}~in \cite{product} proved that $\pi_{\opt}(K_m^{\square d})=2^d$ if $m>2^{d-1}$. In fact, a more general statement is proved in \cite{product}, but this is enough for our purposes. The diameter of these graphs is $d$, therefore they  prove the sharpness of the diameter bound. 

We can ask, what happens when we consider rubbling instead of pebbling? Unfortunately the proof of Herscovici \emph{et al.}~rely on several phenomena true for pebbling but false for rubbling. We answer this question and prove that $\rho_{\opt}(K_m^{\square d})=2^d$ if $m\geq 2^{d}$. Since $\rho_{\opt}(G)\leq \pi_{\opt}(G)$, it is also a new short proof for the pebbling case. Our method uses the concept of distance domination.   

A distance $k$ domination set $S$ of a graph is a subset of the vertex set such that for each vertex $v$ there is an element $s$ of $S$ whose distance from $v$ is at most $k$. The distance $k$ domination number of a graph, denoted by $\gamma_k$, is the size of the smallest distance $k$ domination set. 

First we prove that $\rho_{\opt}(G)\geq \min\left(\gamma_{k-1}(G),2^k\right)$ for each $k$, 
then we give an improved lower bound using both $\gamma_{k-1}$ and $\gamma_{k-2}$. 

Finally we use these bounds to show that $\pi_{\opt}(K_3\square K_3\square K_5)=6$.

\section{Counterexample to the proof of Muntz \textit{et al.}}

Muntz \textit{et al.}~give an iterative construction of graphs. They claim in \cite{smalldiam}, that if $G$ is a graph 
with diameter $d$ and its optimal pebbling number is $2^d$, then $G\square K_{2^d+1}$ is a graph with diameter 
$d+1$ and optimal pebbling number $2^{d+1}$. It is easy to see that $\diam(G\square K_{2^d+1})=d+1$, however
its optimal pebbling number is not necessarily $2\pi_{\opt}(G)$. 

Muntz \textit{et al.}~choose $K_3$ as a starting graph. The third graph in the sequence is $K_3\square K_3\square K_5$.
The optimal pebbling number of this graph is not $8$, as the authors claimed. 

\begin{cl}
The optimal pebbling number of $K_3\square K_3\square K_5$ is at most $6$.
\label{kevesebb}
\end{cl}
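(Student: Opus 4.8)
The plan is to exhibit an explicit solvable distribution of six pebbles. The first thing I would record is a structural simplification: since every factor is a complete graph, the distance between two vertices of $K_3\square K_3\square K_5$ equals the number of coordinates in which they differ, i.e. the Hamming distance on $\{0,1,2\}\times\{0,1,2\}\times\{0,1,2,3,4\}$, and the diameter is indeed $3$. I would then propose the distribution $P$ that places two pebbles on each of the three ``diagonal'' vertices $(0,0,0)$, $(1,1,0)$ and $(2,2,0)$, so that $|P|=6$, and argue that $P$ is solvable.

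To organize the verification I would first note that the projection $\{(0,0),(1,1),(2,2)\}$ is a dominating set of the base $K_3\square K_3$: any pair $(t_1,t_2)$ agrees with $(t_1,t_1)$ in the first coordinate, hence lies within Hamming distance $1$ of a diagonal vertex. Consequently every vertex $t=(t_1,t_2,t_3)$ is within distance $2$ of at least one of the three piles (base distance at most $1$, plus at most $1$ from the $K_5$-coordinate). If $t$ is within distance $1$ of a pile, then the two pebbles on that pile already reach $t$ by a single move, so these vertices are immediate.

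The heart of the argument — and the step I expect to be the main obstacle — is the case $d(t,\text{pile})=2$ for the nearest pile, which occurs exactly when $(t_1,t_2)$ is off-diagonal and $t_3\ne 0$. Here no single pile of two pebbles can reach $t$, and because we are pebbling rather than rubbling the contributions of two different piles may be combined only if their pebbles actually meet on a common vertex. The key idea is that the off-diagonal position $(t_1,t_2)$ is at base-distance $1$ from exactly two diagonal vertices, namely $(t_1,t_1)$ and $(t_2,t_2)$, and both corresponding piles lie at distance $1$ from the single relay vertex $m=(t_1,t_2,0)$; moving one pebble from each pile onto $m$ puts two pebbles there, and a final move along the $K_5$-coordinate delivers a pebble to $t$. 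Since $m$ is a neighbour of $t$ for every such target, this relay works uniformly, and the cyclic symmetry $x\mapsto x+1 \pmod 3$ on the two $K_3$-coordinates (which permutes the three piles) together with the transposition of those coordinates lets me reduce all remaining checks to a single representative. This completes the proof that $\pi_{\opt}(K_3\square K_3\square K_5)\le 6$.
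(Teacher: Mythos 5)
Your proof is correct and matches the paper's approach: the distribution with two pebbles on each of $(0,0,0)$, $(1,1,0)$, $(2,2,0)$ is exactly the one the paper depicts in its figure, and your relay argument through $m=(t_1,t_2,0)$ is precisely the paper's observation that two pebbles can be moved to every vertex of the leftmost $K_3\square K_3$, after which one further move reaches any vertex. Your write-up merely makes explicit (via coordinates and the Hamming-distance description) what the paper delegates to the figure.
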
 

\begin{proof}
A solvable distribution with $6$ pebbles is given in Figure \ref{ellenpelda}. We can move two pebbles to each vertex of the leftmost 
$K_3\square K_3$. Since each other vertex is connected to these vertices, all vertices are reachable.
\end{proof}

\begin{figure}
\centering
\includegraphics[scale=1]{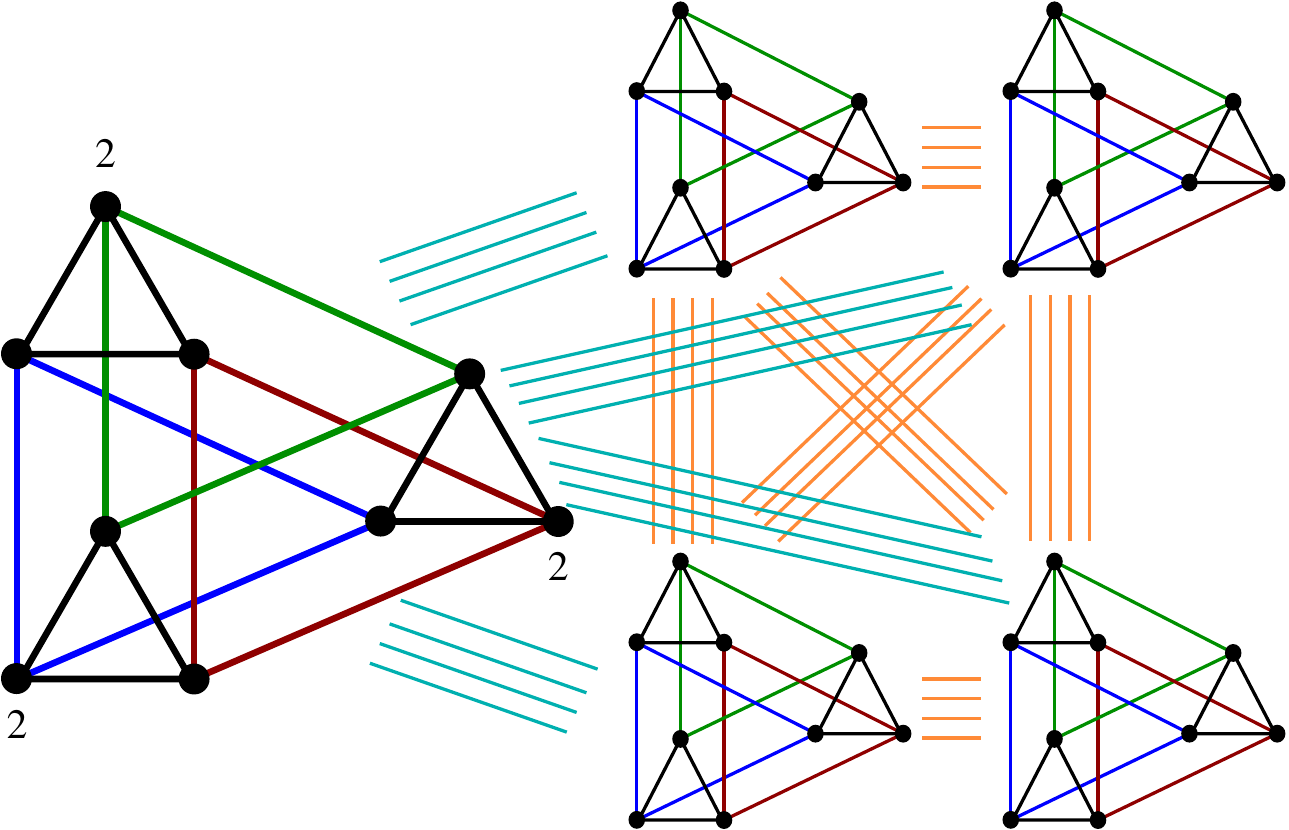}
\caption{An optimal distribution of $K_3\square K_3\square K_5$ using $6$ pebbles.}
\label{ellenpelda}
\end{figure}

Furthermore, 
all later graphs in the sequence are counterexamples. Because if we take a solvable distribution of $G$ and use the double of its pebbles on a copy of $G$ in 
$G\square K_n$, then we get a solvable distribution of $G\square K_n$. Thus if $\pi_{opt}(G)<2^d$, then $\pi_{opt}(G\square K_n)<2^{d+1}$.

Besides, it can be proven that changing the starting graph does not help, the construction fails. 

\section{A lower bound given by the distance domination number}
We establish our first lower bound on the optimal pebbling and rubbling numbers using the distance $k$ domination number. 

\begin{thm}
Let $G$ be a connected graph and $k$ be an integer greater than one
, then: 
$$\rho_{\opt}(G)\geq \min\left(\gamma_{k-1}(G),2^k\right).$$
\label{kdomlemma}
\end{thm}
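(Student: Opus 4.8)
The plan is to bound the size of an \emph{arbitrary} solvable distribution directly, using a weight function attached to a target vertex. Fix a vertex $v$ and give a pebble sitting on a vertex $u$ the weight $2^{-d(u,v)}$; for a distribution $Q$ let $W_v(Q)=\sum_{u\in V(G)}Q(u)2^{-d(u,v)}$ denote the total weight. The central step is to show that $W_v$ never increases along an executable sequence of rubbling moves. The pebbling case is routine: a pebbling move deletes weight $2\cdot 2^{-d(u,v)}$ from $u$ and creates weight $2^{-d(w,v)}\le 2^{-(d(u,v)-1)}=2\cdot 2^{-d(u,v)}$ on a neighbour $w$, so the weight does not grow.

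The step where rubbling genuinely differs, and the part I expect to need the most care, is the strict rubbling move: it removes one pebble from each of two vertices $u_1,u_2$ sharing a common neighbour $w$ and adds one pebble to $w$. Here I would use that $w$ is adjacent to both $u_i$, so $d(w,v)\ge d(u_i,v)-1$ for $i=1,2$ and hence $d(w,v)\ge\max(d(u_1,v),d(u_2,v))-1$. Writing $a=2^{-d(u_1,v)}$ and $b=2^{-d(u_2,v)}$, this gives $2^{-d(w,v)}\le 2\min(a,b)\le a+b$, and the right-hand side is exactly the weight removed. Thus $W_v$ is non-increasing under every rubbling move, and therefore along any executable sequence.

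With this monotonicity in hand the rest is a short counting argument. If $v$ is reachable under a distribution $P$, then at the end of the moves that bring a pebble to $v$ the weight is at least $2^{0}=1$, so by monotonicity $W_v(P)\ge 1$ already. Now let $P$ be any solvable distribution and suppose $|P|<2^k$. Fix $v$: if every vertex of $\supp(P)$ lay at distance at least $k$ from $v$, then $W_v(P)\le\sum_{u}P(u)2^{-k}=|P|\,2^{-k}<1$, contradicting $W_v(P)\ge 1$. Hence each vertex $v$ has a vertex of $\supp(P)$ within distance $k-1$, so $\supp(P)$ is a distance $(k-1)$ dominating set and $|P|\ge|\supp(P)|\ge\gamma_{k-1}(G)$.

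Putting the two cases together, every solvable distribution satisfies $|P|\ge\min(\gamma_{k-1}(G),2^k)$, and applying this to an optimal rubbling distribution yields $\rho_{\opt}(G)\ge\min(\gamma_{k-1}(G),2^k)$. The only delicate point is the strict-rubbling case of the weight inequality; once that is settled the remainder is elementary, and the same argument simultaneously covers the pebbling bound since every pebbling move is a rubbling move.
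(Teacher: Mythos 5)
Your proof is correct and takes essentially the same route as the paper: both use the weight function $W_P(v)=\sum_{u}P(u)2^{-d(u,v)}$, observe that a vertex at distance at least $k$ from $\supp(P)$ has weight at most $|P|2^{-k}<1$, and conclude that any solvable distribution has at least $\min\left(\gamma_{k-1}(G),2^k\right)$ pebbles. The only difference is that you explicitly verify the monotonicity of the weight under strict rubbling moves (via $2^{-d(w,v)}\le 2\min(a,b)\le a+b$), a step the paper asserts with ``Clearly'' and leaves unproved.
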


The \emph{support} of a pebble distribution $P$, denoted by $\supp(P)$ is the set of vertices containing at least one pebble.

The \emph{weight function} of $P$, which is defined on the vertex set of $G$, is 
$W_P(u)=\sum_{v\in V(G)}P(v)2^{-d(u,v)}$.

Clearly, if a vertex is reachable under $P$, then its weight is at least one. This is true for both pebbling and rubbling.

\begin{proof}
Consider a pebble distribution $P$ whose size is less than both $\gamma_{k-1}(G)$ and $2^k$. Hence $\supp(P)$ is not a distance $k-1$ dominating set. There is a vertex $v$ whose distance from $\supp(P)$ is $k$. Therefore the weight of this vertex is at most $\frac{1}{2^k}|P|<1$, hence $v$ is not reachable under $P$. So a solvable pebble distribution has at least $\min(\gamma_{k-1}(G),2^k)$ pebbles. 
\end{proof}

We are free to choose $k$. The best bound is obtained when $\gamma_{k-1}\approx 2^k$. 

Notice that the proof exploited that each vertex contains integer pebbles and that the degradation of pebbles is exponential in sense of the distance. On the other hand, we have not used that a pebbling or a rubbling move removes integer number of pebbles. Therefore this method also works when a pebble can be broken to arbitrary small pieces. Hence it also gives a bound on the optimal integer fractional covering ratio which is defined in \cite{gridnote}.

\section{The optimal rubbling number of $K_m^{\square d}$ is $2^d$ if $m\geq 2^d$}

Let $\Sigma_{m,k}$ be the following graph: We choose an alphabet $\Sigma$ of size $m$. The vertices of $\Sigma_{m,k}$ are the words over $\Sigma$ of length $k$. Two vertices are adjacent if and only if the corresponding words differ only at one position, roughly speaking their Hamming distance is one. It is well known that $\Sigma_{m,k}\simeq K_m^{\square k}$. We use this coding theory approach because it is more natural for us to interpret the following proofs in this language. 

It is easy to see that  $\diam(\Sigma_{m,k})=k$: We have to change all $k$ characters of word $a,a,\dots,a$ to obtain $b,b,\dots,b$, each of the changes requires passing through an edge. We can obtain any word from any other by changing each character at most once, hence $\diam(\Sigma_{m,k})=k$. 

\begin{cl}
$\gamma_{k-1}(\Sigma_{m,k})=m$.
\end{cl}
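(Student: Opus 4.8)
The plan is to prove the two inequalities $\gamma_{k-1}(\Sigma_{m,k}) \le m$ and $\gamma_{k-1}(\Sigma_{m,k}) \ge m$ separately. For the upper bound I would exhibit an explicit distance $(k-1)$-dominating set of size $m$, namely the set of all $m$ constant words $\{\,\underbrace{aa\cdots a}_{k}: a\in\Sigma\,\}$. Given an arbitrary word $w=w_1w_2\cdots w_k$, the constant word obtained by repeating its first letter, $w_1w_1\cdots w_1$, agrees with $w$ in at least the first coordinate, so the two words differ in at most $k-1$ positions; since the distance in $\Sigma_{m,k}$ is exactly the Hamming distance, $w$ lies within distance $k-1$ of this set. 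As $w$ was arbitrary, the $m$ constant words form a distance $(k-1)$-dominating set, giving $\gamma_{k-1}\le m$.

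For the lower bound I would show that no set $S\subseteq V(\Sigma_{m,k})$ with $|S|<m$ can distance $(k-1)$-dominate the graph, by constructing a word at the maximal distance $k=\diam(\Sigma_{m,k})$ from every element of $S$ simultaneously. The key observation is a coordinatewise pigeonhole argument: for each position $i\in\{1,\dots,k\}$ the collection of characters $\{s_i : s\in S\}$ contains at most $|S|<m=|\Sigma|$ distinct letters, so some character $c_i\in\Sigma$ occurs at position $i$ in no word of $S$. Assembling these choices yields a single word $w=c_1c_2\cdots c_k$ that disagrees with every $s\in S$ in all $k$ coordinates, hence $d(w,s)=k$ for every $s\in S$. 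Thus $w$ is not within distance $k-1$ of $S$, so $S$ is not a distance $(k-1)$-dominating set, which forces $\gamma_{k-1}\ge m$.

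Combining the two bounds gives the claim. The upper bound is essentially immediate; the content lies in the lower bound, and the step I would treat most carefully is verifying that the independent per-coordinate choices of "missing" characters genuinely combine into one word realizing distance exactly $k$ from the entire set $S$ at once — this is precisely what allows fewer than $m$ vertices to fail to cover a diameter-distant word. Everything rests on $|\Sigma|=m$ strictly exceeding $|S|$, which guarantees an unused character in each coordinate.
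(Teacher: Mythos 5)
Your proposal is correct and follows essentially the same route as the paper: the constant words as the explicit distance $(k-1)$-dominating set of size $m$, and the coordinatewise pigeonhole choice of an unused character $c_i$ in each position to build a word at Hamming distance $k$ from any set of fewer than $m$ vertices. Your version is slightly more explicit (matching the first letter for the upper bound, stating the lower bound for all $|S|<m$ rather than $|S|=m-1$), but the argument is identical in substance.
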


\begin{proof}
The set containing all constant words over alphabet $\Sigma$ with length $k$  is a distance $k-1$ dominating set, because it is enough to change at most $k-1$ characters of a $k$ long word to obtain a constant one. The number of these words is $m$.

Let $A$ be a set of words over alphabet $\Sigma$ with length $k$ such that the size of $A$ is $m-1$. Consider the $i$th characters of all words contained in $A$. The pigeonhole principle implies that there is a character $c_i\in \Sigma$ which does not appear among them. Such a character exists for each position. Consider the word $c=c_1c_2\dots c_k$. We have to change all of its characters to obtain a word contained in $A$, thus its distance from $A$ is $k$ so $A$ is not a distance $k-1$ dominating set.
 \end{proof} 

\begin{thm}
Both the optimal pebbling and optimal rubbling number of $K_m^{\square d}$ is $2^d$ if $m\geq 2^d$.
\end{thm}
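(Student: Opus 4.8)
The plan is to combine the elementary diameter upper bound with the distance-domination lower bound of Theorem~\ref{kdomlemma}: essentially all the substantive work has already been carried out, so the argument is an assembly of the pieces established above rather than a new computation.

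First I would record the upper bound. Since $\diam(K_m^{\square d})=\diam(\Sigma_{m,d})=d$ and $\pi_{\opt}(G)\le 2^{\diam(G)}$ holds for every connected graph, we get $\pi_{\opt}(K_m^{\square d})\le 2^d$. Because every pebbling move is also a rubbling move, $\rho_{\opt}(K_m^{\square d})\le \pi_{\opt}(K_m^{\square d})\le 2^d$, so both quantities are at most $2^d$ without any use of the hypothesis $m\ge 2^d$.

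For the matching lower bound I would apply Theorem~\ref{kdomlemma} to $G=\Sigma_{m,d}$ with the choice $k=d$, which is permitted as long as $d\ge 2$. This yields $\rho_{\opt}(\Sigma_{m,d})\ge \min\!\left(\gamma_{d-1}(\Sigma_{m,d}),2^d\right)$. The Claim establishing $\gamma_{k-1}(\Sigma_{m,k})=m$, specialized to the present graph, gives $\gamma_{d-1}(\Sigma_{m,d})=m$. Since $m\ge 2^d$ by hypothesis, the minimum equals $2^d$, whence $\rho_{\opt}(\Sigma_{m,d})\ge 2^d$. Using $\rho_{\opt}\le\pi_{\opt}$ once more, the same lower bound passes to the pebbling number, and comparing with the upper bound forces $\rho_{\opt}(K_m^{\square d})=\pi_{\opt}(K_m^{\square d})=2^d$.

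There is no deep obstacle here; the only care needed is at the boundary and in the index bookkeeping. Theorem~\ref{kdomlemma} requires $k>1$, so the argument as written covers $d\ge 2$, and the case $d=1$ must be dispatched separately: $K_m^{\square 1}=K_m$ with $m\ge 2$, where two pebbles on a single vertex reach every (adjacent) vertex while a single pebble cannot move at all, giving $\rho_{\opt}(K_m)=\pi_{\opt}(K_m)=2=2^1$. The one thing that genuinely must be gotten right is keeping the three indices aligned—the parameter $k$ of the theorem, the word length $d$ of the graph, and the parameter in the $\gamma$-Claim—so that the substitution $k=d$ correctly turns $\gamma_{k-1}$ into $\gamma_{d-1}(\Sigma_{m,d})=m$; everything else is immediate.
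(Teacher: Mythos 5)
Your proposal is correct and follows essentially the same route as the paper: the trivial upper bound of $2^d$ pebbles on one vertex, and the lower bound from Theorem~\ref{kdomlemma} with $k=d$ combined with $\gamma_{d-1}(\Sigma_{m,d})=m\geq 2^d$. Your explicit handling of the $d=1$ boundary case (where the theorem's hypothesis $k>1$ fails) is a small point of care the paper's proof passes over silently, but it does not constitute a different approach.
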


\begin{proof}
We have already seen that $2^d$ pebbles at a single vertex is enough to construct a solvable pebble distribution even if we consider only pebbling moves. 

For the lower bound we set $k$ as $d$ and apply Theorem \ref{kdomlemma}. The obtained lower bound is also $2^d$.
\end{proof}

\section{Lower bounds using both $\gamma_{k-1}$ and $\gamma_{k-2}$}

To improve Theorem \ref{kdomlemma}, we have to use several properties of pebbling and rubbling. Therefore the obtained bounds are no longer the same for $\pi_{\opt}$ and $\rho_{\opt}$.

Let $S$ be a subset of $V(G)$. The open neighbourhood of vertex $v$ is the set of vertices which are adjacent to $v$. The closed neighbourhood contains the adjacent vertices plus the vertex itself. The closed neighbourhood of set $S$, denoted by $N[S]$, is defined as the union of the closed neighbourhoods of vertices contained in $S$. We write $N(S)$ for the open neighbourhood of $S$ which is defined as $N[S]\setminus S$.

Let $\sigma$ be a sequence of pebbling moves and let $M$ be a pebbling move which contained in $\sigma$. We write $\sigma\setminus M$ for the sequence of pebbling moves which we get after we delete the last appearance of $M$. If we add an additional pebbling move $T$ to the beginning of $\sigma$, then we denote the obtained sequence by $T\sigma$.

Let $P$ be a pebble distribution on $G$ and $S$ be an arbitrary subset of $V(G)$. Then the the \emph{restriction} of $P$ to $S$ is a pebble distribution which is defined as follows:
$$P|_{S}=\begin{cases}
 P(v) &\text{if }v\in S\\
 0 &\text{otherwise}
\end{cases}$$

\begin{thm}
For all $k\geq 3$ and any graph $G$ whose edge set is non empty we have:
 
$$\pi_{\opt}(G)\geq \min\left(2^k,\gamma_{k-1}(G)+2^{k-2},\gamma_{k-2}(G)+1\right)$$
\label{kozepso}
\end{thm}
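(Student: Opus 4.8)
The plan is to argue by contradiction: assume $P$ is solvable with $|P|<\min\left(2^k,\gamma_{k-1}(G)+2^{k-2},\gamma_{k-2}(G)+1\right)$ and derive a contradiction. The weight function does the first, routine, piece of work, exactly as in Theorem~\ref{kdomlemma}. If $\supp(P)$ failed to be a distance $k-1$ dominating set, some vertex would lie at distance at least $k$ from every pebble, hence would have weight at most $|P|/2^{k}<1$ and be unreachable. So $\supp(P)$ distance $k-1$ dominates $G$, and therefore $|\supp(P)|\ge\gamma_{k-1}(G)$. Writing $|P|=|\supp(P)|+\sum_{u\in\supp(P)}\bigl(P(u)-1\bigr)$, it now suffices to exhibit $2^{k-2}$ \emph{surplus} pebbles, i.e.\ to prove $\sum_{u\in\supp(P)}\bigl(P(u)-1\bigr)\ge 2^{k-2}$; together with $|\supp(P)|\ge\gamma_{k-1}(G)$ this forces $|P|\ge\gamma_{k-1}(G)+2^{k-2}$, the desired contradiction.

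Next I would use the third term to locate a far but reachable vertex. Since $|P|<\gamma_{k-2}(G)+1$ we have $|P|\le\gamma_{k-2}(G)$, so $\supp(P)$ cannot distance $k-2$ dominate $G$: otherwise $\gamma_{k-2}(G)\le|\supp(P)|\le|P|\le\gamma_{k-2}(G)$ would force equality throughout, so every occupied vertex would carry exactly one pebble. Then no pebbling move is available, the reachable set equals $\supp(P)$, and solvability gives $\supp(P)=V(G)$ and hence $\gamma_{k-2}(G)=|V(G)|$ — impossible once $G$ has an edge and $k-2\ge 1$, because a single endpoint of that edge already distance $k-2$ dominates its neighbour, so that neighbour may be dropped from a dominating set. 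Consequently there is a vertex $v$ with $d(v,\supp(P))\ge k-1$; combined with the domination established in the first step, $d(v,\supp(P))=k-1$ exactly, and $v$ is reachable.

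The heart of the matter is then the following purely pebbling statement, which I would isolate as a lemma: \emph{if a vertex is reachable by pebbling under a distribution all of whose pebbles lie at distance at least $r\ge 1$ from it, then that distribution carries at least $2^{r-1}$ surplus pebbles.} Applied to $v$ with $r=k-1$ this yields exactly the $2^{k-2}$ surplus demanded above, completing the contradiction. I expect the lemma to be the main obstacle, and it is precisely the place where pebbling and rubbling diverge: a weight-only argument is \emph{not} enough here, since it is blind to the fact that two isolated single pebbles on distinct vertices can never be merged by pebbling (they can by rubbling), and indeed a fractional relaxation already admits configurations of size $\gamma_{k-1}(G)+2^{k-2}-1$.

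To prove the lemma I would induct on $r$, using the move-surgery notation set up before the statement. In the base case $r=1$ the target is empty, so the last move is a pebbling move from a neighbour that must hold two pebbles, already one unit of surplus, namely $2^{0}$. For the inductive step ($r\ge 2$) the final move onto the target comes from a neighbour $w$ that must accumulate two pebbles, and every pebble of $P$ lies at distance at least $r-1$ from $w$. Delivering two pebbles to $w$ splits, by peeling off moves with $\sigma\setminus M$ and restricting the distribution with $P|_{S}$ along the two pebble genealogies that feed $w$, into two source-disjoint single-pebble deliveries to $w$, each from distance at least $r-1$; the induction hypothesis gives each at least $2^{r-2}$ surplus, and disjointness lets the two contributions be added, for a total of $2\cdot 2^{r-2}=2^{r-1}$. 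The delicate point, and where I would spend the most care, is verifying that the two genealogies really can be separated so that no source pebble is charged twice; this is exactly the integrality phenomenon that distinguishes the pebbling bound from its rubbling analogue.
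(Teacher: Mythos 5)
Your proposal is correct, and while it shares the paper's outer framing, the decisive step is proved by a genuinely different mechanism. You and the paper agree on the reductions: the $2^k$ term forces $\supp(P)$ to be a distance $k-1$ dominating set via the weight function (so $|\supp(P)|\ge\gamma_{k-1}(G)$), the $\gamma_{k-2}(G)+1$ term together with the non-empty edge set (your handling of the all-singletons case is actually more careful than the paper's one-line version) produces a reachable vertex $v$ with $d(v,\supp(P))=k-1$, and everything reduces to showing the surplus $|P|-|\supp(P)|\ge 2^{k-2}$. The paper gets this surplus bound by splitting $\sigma$ into a maximal \emph{proper subdivision} $\tau,\mu$, proving by an exchange argument that $\supp(P_\tau)\subseteq N[\supp(P)]$, counting that at most $|P|-|\supp(P)|$ pebbles enter $N(\supp(P))$, and then invoking the weight function a second time, applied to $P_\tau|_{N(\supp(P))}$ at distance at least $k-2$ from $v$. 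Your inductive lemma (reachability from distance $\ge r$ forces at least $2^{r-1}$ surplus pebbles) replaces that second weight estimate with a purely combinatorial genealogy argument; applied with $r=k-1$ it recovers exactly the paper's inequality $2^{2-k}\left(|P|-|\supp(P)|\right)\ge 1$. Moreover, the ``delicate point'' you flag is less delicate than you fear: you do not need the two genealogies to be disjoint as sets of source \emph{vertices}, only as multisets of source \emph{pebbles}, i.e.\ $P_1+P_2\le P$ pointwise. Token-tracking gives this automatically --- each pebble instance is consumed by at most one move, so the two pebbles the final move removes from $w$ have disjoint ancestor trees (note $w\notin\supp(P)$ since $r\ge 2$), and each tree's moves, taken in their $\sigma$-order, are executable from that tree's leaves alone. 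The surplus is then superadditive even when the supports of $P_1$ and $P_2$ overlap, because $|\supp(P_1+P_2)|\le|\supp(P_1)|+|\supp(P_2)|$, and it is monotone under $Q\le P$, so the two inductive contributions of $2^{r-2}$ add as required. As for what each route buys: the paper's $\tau,\mu$ machinery is shorter here and is reused verbatim, with a case analysis, to squeeze out the extra $+1$ in Theorem \ref{uccso}; your lemma is a clean, self-contained and reusable statement (tight, as $2^{r-1}$ pairs placed at distance $r$ show), it avoids the weight function at the decisive step, and it isolates precisely the integrality phenomenon that makes the pebbling bound stronger than its rubbling analogue, where a strict rubbling move can merge two single pebbles and the corresponding surplus count degrades to $|P|-\frac{|\supp(P)|}{2}$.
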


\begin{proof}
Consider a solvable pebble distribution $P$. We have already seen that $|P|\geq \min(\gamma_{k-1}(G),2^k)$.

Assume that $|P|< \min(\gamma_{k-2}(G)+1,2^k)$. Either $\supp(P)$ is not a distance $k-2$ domination set or each vertex has at most one pebble. In the later case there are no available pebbling moves but there are vertices which do not have pebbles, so they are not reachable which is a contradiction.  

In the other case, there is a vertex $v$ whose distance from $\supp(P)$ is at least $k-1$. On the other hand, $\supp(P)$ has to be a distance $k-1$ domination set,
since otherwise $2^k$ pebbles would be required to reach some of the vertices.

Let $\sigma$ be an executable sequence of pebbling moves moving a pebble to $v$.
We say that a subdivision of $\sigma$ to two subseqences $\tau$ and $\mu$ is proper if $\tau$ and $\mu$ are executable under $P$ and $P_\tau$, respectively and $\mu$ does not contain a move which removes a pebble from $\supp(P)$.
We chose a proper subdivision where the size of $\mu$ is maximal.

We execute $\tau$ and investigate the obtained distribution $P_{\tau}$. We show that $\supp(P_{\tau})\subseteq N[\supp(P)]$:
Assume that a vertex outside of $ N[\supp(P)]$ has a pebble under $P_\tau$. Then the last pebbling move $M$ which placed it there does not remove pebbles from $\supp(P)$. $\tau\setminus M$ is executable and if we put $M$ to the beginning of $\mu$ then $M\mu$ is also executable under $P_{\tau\setminus M}$. Furthermore $M\mu$ does not remove a pebble from $\supp(P)$, thus $\tau\setminus M$, $M\mu$ is a proper subdivision of $\sigma$ which contradicts with the maximality of $\mu$. Therefore $\supp(P_{\tau})\subseteq N[\supp(P)]$.

At each vertex of $\supp(P)$ the execution of $\tau$ either leaves a pebble or it removes at least two pebbles by a pebbling move which consumes one pebble. Thus at most $|P|-|\supp(P)|$ pebbles arrive at $N(\supp(P))$ after the execution of $\tau$.

$\mu$ uses only these pebbles and moves a pebble to $v$. Therefore $v$ is reachable under $P_{\tau}|_{N(\supp(P))}$. The distance of $v$ from $\supp(P_{\tau})$ is at least $k-2$, therefore
$2^{2-k}(|P|-|\supp(P)|)\geq W_{P\tau |N(\supp(P))}(v)\geq 1$.
Since $|\supp(P)|\geq \gamma_{k-1}(G)$, we get that $|P|\geq 2^{k-2}+\gamma_{k-1}(G)$.

So either $|P|\geq 2^{k-2}+\gamma_{k-1}(G)$ or our assumption was false and $|P|\geq \min(\gamma_{k-2}(G)+1,2^k)$. Altogether these imply the desired result. 
\end{proof}

If we talk about rubbling, then there are two main differences. First, a distribution which places at most one pebble everywhere and leaving a vertex without a pebble can be solvable. Second, a rubbling move can remove pebbles from two vertices and consume just one pebble, hence we can state just that $|P|-\frac{|\supp(P)|}{2}$ pebbles arrive at $N(\supp(P))$ after the execution of $\tau$. If we change the above proof accordingly, then we get the following improved version of Theorem \ref{kdomlemma} for rubbling:

\begin{thm}
For all $k\geq 2$ and all graphs $G$ we have:
 
$$\rho_{\opt}(G)\geq \min\left(2^k,\max\left(\frac{\gamma_{k-1}(G)}{2}+2^{k-2},\gamma_{k-1}(G)\right),\gamma_{k-2}(G)\right).$$
\end{thm}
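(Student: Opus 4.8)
The plan is to run the argument of Theorem~\ref{kozepso} almost verbatim, replacing pebbling by rubbling and carefully tracking the two places where rubbling behaves differently: a distribution placing at most one pebble per vertex may now be solvable, and a strict rubbling move can draw its two consumed pebbles from two distinct vertices. Concretely, I would fix a solvable rubbling distribution $P$ and show that if $|P|<2^k$ and $|P|<\gamma_{k-2}(G)$, then $|P|\geq\max\!\left(\tfrac{\gamma_{k-1}(G)}{2}+2^{k-2},\,\gamma_{k-1}(G)\right)$. Combined with the two cases $|P|\geq 2^k$ and $|P|\geq\gamma_{k-2}(G)$, which are immediate, this yields the stated minimum.

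First I would record the two ingredients of the inner maximum. By the rubbling form of Theorem~\ref{kdomlemma}, since $|P|<2^k$ the set $\supp(P)$ must be a distance $(k-1)$-dominating set, so $|\supp(P)|\geq\gamma_{k-1}(G)$ and hence $|P|\geq\gamma_{k-1}(G)$; this is the second entry of the maximum. For the first entry I would locate a far vertex: because $|\supp(P)|\leq|P|<\gamma_{k-2}(G)$ strictly, $\supp(P)$ is not a distance $(k-2)$-dominating set, so some vertex $v$ has distance at least $k-1$ from $\supp(P)$. This is exactly where the additive $1$ is lost relative to the pebbling bound. In Theorem~\ref{kozepso} one can afford $|P|=\gamma_{k-2}(G)$ and still conclude, using that a distribution with at most one pebble per vertex admits no pebbling move; for rubbling such a distribution can be solvable, so I must impose the strict inequality $|P|<\gamma_{k-2}(G)$ and the third entry becomes $\gamma_{k-2}(G)$ rather than $\gamma_{k-2}(G)+1$.

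Next I would reproduce the proper-subdivision machinery. Taking an executable rubbling sequence $\sigma$ reaching $v$ and a proper subdivision $(\tau,\mu)$ with $|\mu|$ maximal, the containment $\supp(P_\tau)\subseteq N[\supp(P)]$ goes through: if a pebble sat outside $N[\supp(P)]$, the last move creating it would have all its sources outside $\supp(P)$ (even when it is a strict rubbling move, since the target's neighbours all lie outside $\supp(P)$), so prepending it to $\mu$ would give a proper subdivision with larger $\mu$, a contradiction. The crucial modification is the count of how many pebbles reach $N(\supp(P))$: in the pebbling argument each support vertex costs a full reserved pebble, but a single strict rubbling move can consume the last pebble of two support vertices at once, so the reservation drops to half a pebble per vertex, leaving at most $|P|-\tfrac{|\supp(P)|}{2}$ pebbles at $N(\supp(P))$. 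Since $v$ is then reachable from $P_\tau|_{N(\supp(P))}$ using rubbling moves and lies at distance at least $k-2$ from $\supp(P_\tau)$, the weight bound $2^{2-k}\!\left(|P|-\tfrac{|\supp(P)|}{2}\right)\geq 1$ forces $|P|\geq 2^{k-2}+\tfrac{\gamma_{k-1}(G)}{2}$, which is the first entry of the maximum.

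The step I expect to demand the most care is this revised counting bound: I would need to argue cleanly that, over the whole of $\tau$, the total pebble loss incurred in emptying or retaining the support vertices is at least $\tfrac{|\supp(P)|}{2}$, correctly crediting strict rubbling moves that straddle two support vertices and accounting for any pebbles that flow back into $\supp(P)$ during $\tau$. The remaining points, namely checking the boundary case $k=2$ (where $2^{k-2}=1$ and distance $(k-2)$-domination degenerates to $\gamma_0=|V(G)|$) and confirming that the exchange argument respects the allowedness condition of strict rubbling moves, should be routine once this accounting is settled.
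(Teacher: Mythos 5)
Your proposal is correct and takes essentially the same route as the paper: the paper proves this theorem only by pointing back at the proof of Theorem~\ref{kozepso} and noting exactly your two modifications (a distribution with at most one pebble per vertex may be rubbling-solvable, which trades $\gamma_{k-2}(G)+1$ for $\gamma_{k-2}(G)$, and a strict rubbling move can empty two support vertices while consuming only one pebble, which trades $|P|-|\supp(P)|$ for $|P|-\frac{|\supp(P)|}{2}$ in the counting step). Your explicit handling of the exchange argument for strict rubbling moves and of the half-pebble-per-support-vertex accounting is in fact more detailed than what the paper provides.
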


We can slightly improve the pebbling result if we do some case analysis.

\begin{thm}
For all $k\geq 3$ and any graph $G$ whose edge set is non empty 
we have:
 
$$\pi_{\opt}(G)\geq \min\left(2^k,\gamma_{k-1}(G)+2^{k-2}+1,\gamma_{k-2}(G)+1\right).$$
\label{uccso}
\end{thm}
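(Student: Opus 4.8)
The plan is to re-run the proof of Theorem~\ref{kozepso} verbatim up to its key estimate, and then replace the single weight inequality at the end by a case distinction that exploits a feature special to pebbling: every merge must collect two pebbles on one vertex. So I would first fix a solvable distribution $P$ and assume for contradiction that $|P|$ is smaller than all three quantities in the claimed minimum; in particular $|P|<\min(2^k,\gamma_{k-2}(G)+1)$, so the argument of Theorem~\ref{kozepso} produces a vertex $v$ with $d(v,\supp(P))=k-1$, a proper subdivision $\sigma=\tau\mu$ with $|\mu|$ maximal, the containment $\supp(P_\tau)\subseteq N[\supp(P)]$, and the bound that the number $q$ of pebbles lying on $N(\supp(P))$ after $\tau$ satisfies $q\le |P|-|\supp(P)|$. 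Since $\mu$ reaches $v$ using only these pebbles, each of which is at distance at least $k-2$ from $v$, reachability forces $2^{2-k}q\ge W_{P_\tau|_{N(\supp(P))}}(v)\ge 1$, that is $q\ge 2^{k-2}$.

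Next I would split on whether this last inequality is strict. If $q\ge 2^{k-2}+1$, then $|P|\ge |\supp(P)|+q\ge \gamma_{k-1}(G)+2^{k-2}+1$ and we are finished. All the content therefore lies in the tight case, which by the assumption $|P|\le \gamma_{k-1}(G)+2^{k-2}$ together with $|\supp(P)|\ge \gamma_{k-1}(G)$ pins down everything at once: $q=2^{k-2}$, $|\supp(P)|=\gamma_{k-1}(G)$, and $q=|P|-|\supp(P)|$. Here $W_{P_\tau|_{N(\supp(P))}}(v)=1$ with all $2^{k-2}$ relevant pebbles at distance exactly $k-2$ from $v$; because the weight function never increases under a pebbling move while the final weight at $v$ is already $1$, every move of $\mu$ must be directed strictly toward $v$ and must waste no pebble. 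Thus $\mu$ is a lossless binary collapse of $2^{k-2}$ pebbles down to a single pebble on $v$, and my goal becomes to show that such an economical $P$ cannot actually exist, which is exactly the extra $+1$.

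The heart of the proof, and the step I expect to be the main obstacle, is converting this lossless collapse into one unaccounted pebble. Because a pebbling move (unlike a strict rubbling move) consumes two pebbles on a \emph{single} vertex, the first layer of the collapse can fire only if the $2^{k-2}$ pebbles delivered into $N(\supp(P))$ occupy their vertices with even multiplicity; hence they pair up, and each carrying vertex is adjacent to at least two distinct vertices of $\supp(P)$ that each pushed one pebble onto it. I would then play this pairing against the tight accounting $q=|P|-|\supp(P)|$, which forces every vertex of $\supp(P)$ to retain or consume exactly one pebble: a support vertex that must supply a whole pebbling move either already holds at least three pebbles, so a second pebble at it is unaccounted for, or else the pairing exhibits vertices of $\supp(P)$ that are redundant for distance $k-1$ domination, contradicting $|\supp(P)|=\gamma_{k-1}(G)$; in both branches one obtains $|P|\ge \gamma_{k-1}(G)+2^{k-2}+1$. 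The delicate points — and the reason this is only a \emph{slight} improvement demanding care rather than a new idea — are that pebbles may transit $\supp(P)$ several times, so the one-pebble-per-vertex bookkeeping has to be made robust against such flows, and that the pairing must be matched to the minimality of the dominating set while keeping track of the vertices each support vertex privately dominates. It is precisely this interaction between the pebbling-only co-location constraint and the optimality of $\supp(P)$ that I expect to carry the whole argument.
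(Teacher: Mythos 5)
Your setup is sound and tracks the paper's strategy up to the tight case: the chain $2^{2-k}(|P|-|\supp(P)|)\geq W_{P_\tau|_{N(\supp(P))}}(v)\geq 1$, the forced equalities $q=2^{k-2}$ and $|\supp(P)|=\gamma_{k-1}(G)$, the one-retained-or-consumed-pebble-per-support-vertex accounting, and even your parity/pairing observation (in the tight case each support vertex fires at most once, so the first merge on $N(\supp(P))$ needs deliveries from two distinct support vertices) are all correct consequences. The genuine gap is your final step: you assert that the pairing ``exhibits vertices of $\supp(P)$ that are redundant for distance $k-1$ domination,'' but no such implication holds. Two support vertices sharing the neighbour on which their pebbles merge can each privately dominate far-away vertices in different directions, so neither is redundant; consequently the extremal configuration you must exclude --- $2^{k-2}$ support vertices carrying two pebbles each and firing once onto paired common neighbours, the remaining $\gamma_{k-1}(G)-2^{k-2}$ carrying one pebble --- satisfies every constraint you have derived (tight accounting, even multiplicities, lossless collapse toward $v$, minimality of the dominating set). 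Your other branch (a firing support vertex holding at least three pebbles forces strictness) is fine, but it does not cover this configuration, so your case distinction does not close; by your own admission the transit bookkeeping is also left open.

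The missing idea, and the way the paper actually finishes, is to exploit minimality of $\supp(P)$ not through redundancy of the merged pair but through \emph{private target vertices}: since $|\supp(P)|=\gamma_{k-1}(G)\geq 2$ is a minimum distance-$(k-1)$-dominating set, for each $p\in\supp(P)$ there is a vertex $v$ with $d(v,p)=k-1$ while $d\bigl(v,\supp(P)\setminus\{p\}\bigr)\geq k$. Rerunning the $\tau/\mu$ decomposition with this per-$p$ vertex $v$ as target, any move removing two pebbles from some $q\in\supp(P)\setminus\{p\}$ places a pebble at distance at least $k-1$ from $v$, so either that pebble stays put and its weight coefficient is at most $2^{1-k}<2^{2-k}$ (strictness in the weight inequality), or it is moved on, costing an uncounted consumption (strictness in the counting inequality); moves taking more than two pebbles off one vertex are strict for the same counting reason; and in the only remaining case $\tau$ is the single move from $p$, so $\mu$ holds one pebble, cannot fire, and $\sigma$ never reaches $v$ --- contradiction. (The degenerate case $\gamma_{k-1}(G)=1$ is handled separately: all pebbles on one vertex at distance $k-1$ from some vertex force $|P|\geq 2^{k-1}\geq\gamma_{k-1}(G)+2^{k-2}+1$.) Your single global $v$, equidistant from all of $\supp(P)$, cannot distinguish which support vertex supplies which pebble, which is precisely why the pairing alone yields no contradiction; to complete your argument you would in effect have to rediscover this private-vertex device.
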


\begin{proof}
The previous proof immediately gives the desired result if $|\supp(P)|\neq \gamma_{k-1}(G)$ or one of the inequalities in $2^{2-k}(|P|-|\supp(P)|)\geq W_{P\tau |N(\supp(P))}(v)\geq 1$ is strict. Therefore we investigate the case when $|\supp(P)|= \gamma_{k-1}(G)$ and show that one of the inequalities is strict. We use again the assumption that $|P|< \min(\gamma_{k-2}(G)+1,2^k)$ .

Suppose that $\gamma_{k-1}(G)=1$. Then $P$ contains pebbles only at a vertex $u$. Since $\supp(P)$ is still not a distance $k-2$ domination set, there is a vertex $v$ whose distance from $u$ is $k-1$. Thus $2^{k-1}$ pebbles at $u$ are required to reach $v$ and these are also enough. So $\pi_{\opt}(G)=2^{k-1}\geq \gamma_{k-1}(G)+2^{k-2}+1$.

Otherwise $\gamma_{k-1}(G)\geq 2$. 
Therefore for each $p\in\supp(P)$ there is a vertex $v$, 
such that the distance between $v$ and $p$ is $k-1$ but the distance between $v$ and $\supp(P)\setminus \{p\}$ is at least $k$. 

Fix $p$ and $v$ and choose a $\sigma$ sequence of pebbling moves which moves a pebble to $v$ and divide it to $\tau$ and $\mu$ like in the previous proof.

If $\tau$ removes more than two pebbles from a vertex, then at least two pebbles are consumed there and we have counted at most one consumption at each vertex, hence $|P|-|\supp(P)|>\left|P_{\tau}|_{N(\supp(P))}\right|$
 and the first inequality is strict. 

If $\tau$ contains a pebbling move which removes two pebbles from a $q\in \supp(P)\setminus \{p\}$,
 then this move places a pebble to a vertex $u$ whose distance from $v$ is $k-1$. If another move does not move forward this pebble,
 then $P_{\tau |N(\supp(P)}(u)>0$ and its coefficient in $W_{P\tau |N(\supp(P))}(v)$ is at most $2^{1-k}$ which is smaller than $2^{2-k}$ and the first equality is not possible. Else, a pebbling move removes
 two pebbles from $u$ and consumes a pebble. We have not counted this consumption, hence $|P|-|\supp(P)|>|P\tau |N(\supp(P))|$. 

The only remaining case is when $\tau$ contains only one pebbling move which moves a pebble from $p$ to a vertex $w$. $\mu$ can use only this pebble, but one pebble is not
 enough to apply a single pebbling move, therefore $\mu$ does nothing, $w$ is not $v$ because the distance between them is at least two, 
 so $\sigma$ does not move a pebble to $v$ which is a contradiction. Therefore this case is not possible.
\end{proof}

Using this last version of our result we can determine the optimal pebbling number of $K_3\square K_3\square K_5$.

\begin{cor}
The optimal pebbling number of $K_3\square K_3\square K_5$ is $6$.
\end{cor}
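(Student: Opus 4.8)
The upper bound $\pi_{\opt}(K_3\square K_3\square K_5)\le 6$ is already supplied by Claim \ref{kevesebb}, so the whole task is to prove the matching lower bound $\pi_{\opt}(K_3\square K_3\square K_5)\ge 6$, and the natural tool is Theorem \ref{uccso}. Write $G=K_3\square K_3\square K_5$ and, in the spirit of the $\Sigma_{m,k}$ coding-theory picture, view its vertices as triples $(a,b,c)$ with $a,b\in\{1,2,3\}$ and $c\in\{1,2,3,4,5\}$, where two triples are adjacent exactly when they differ in one coordinate, so that distance equals the number of differing coordinates. Then $G$ is $8$-regular on $45$ vertices with $\diam(G)=3$. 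I would apply Theorem \ref{uccso} with $k=3$, which gives $\pi_{\opt}(G)\ge\min\bigl(2^3,\ \gamma_2(G)+2^1+1,\ \gamma_1(G)+1\bigr)=\min\bigl(8,\ \gamma_2(G)+3,\ \gamma_1(G)+1\bigr)$. Hence it suffices to establish the two estimates $\gamma_1(G)\ge 5$ and $\gamma_2(G)\ge 3$.

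First I would bound $\gamma_1(G)$ by a volume (counting) argument: each closed neighbourhood consists of a vertex together with its $8$ neighbours, so it contains exactly $9$ vertices, and since $|V(G)|=45$ every dominating set must have at least $45/9=5$ vertices, giving $\gamma_1(G)\ge 5$.

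Next I would bound $\gamma_2(G)$ by showing that no two vertices distance-$2$-dominate $G$. Given arbitrary $u=(a_0,b_0,c_0)$ and $w=(a_1,b_1,c_1)$, I would produce a vertex that differs from both in every coordinate, hence lies at distance $3$ from each. Because the first two alphabets have size $3$ and the third has size $5$, each of the sets $\{1,2,3\}\setminus\{a_0,a_1\}$, $\{1,2,3\}\setminus\{b_0,b_1\}$ and $\{1,2,3,4,5\}\setminus\{c_0,c_1\}$ is nonempty; picking one coordinate from each yields such a vertex. Thus any two vertices miss some vertex at distance $3$, so $\gamma_2(G)\ge 3$.

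Substituting these two bounds gives $\pi_{\opt}(G)\ge\min(8,\,3+3,\,5+1)=6$, which together with Claim \ref{kevesebb} forces $\pi_{\opt}(G)=6$. The only genuinely combinatorial step is the $\gamma_2(G)\ge 3$ estimate, but it is light precisely because every factor has at least three vertices; the choice $k=3=\diam(G)$ is what makes all three terms of the bound meet at $6$.
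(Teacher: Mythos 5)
Your proposal is correct, and its skeleton is exactly the paper's: the upper bound from Claim \ref{kevesebb} together with Theorem \ref{uccso} at $k=3$, reducing the lower bound to the two domination estimates $\gamma_1(G)\geq 5$ and $\gamma_2(G)\geq 3$. Where you genuinely diverge is in how you prove those estimates, and in both cases your route is simpler. For $\gamma_1(G)\geq 5$ the paper argues by pigeonhole that a set of four vertices misses one of the five $K_3\square K_3$ layers, and that any vertex outside a layer dominates exactly one vertex of it, so at most four of its nine vertices get dominated; you instead use the sphere-covering (volume) bound $9|S|\geq 45$ coming from $8$-regularity, which happens to suffice here precisely because $45/9=5$ makes the volume bound tight. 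For $\gamma_2(G)\geq 3$ the paper invokes vertex-transitivity and a case analysis on intersections of closed neighbourhoods of undominated vertices within a layer; you instead observe that, since the coordinate alphabets have sizes $3,3,5$, for any two vertices one can choose in each coordinate a value avoiding both, producing a vertex at Hamming distance $3$ from each of them --- this is the same coordinate-avoidance trick the paper itself uses to prove $\gamma_{k-1}(\Sigma_{m,k})=m$, transplanted to the mixed-alphabet product, and it is shorter and less error-prone than the paper's geometric argument. A small further economy on your side: you prove only lower bounds on the domination numbers, which is all Theorem \ref{uccso} requires since its bound is monotone in $\gamma_{k-1}$ and $\gamma_{k-2}$, whereas the paper establishes the exact value $\gamma_2=3$ along the way.
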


\begin{proof}
We have already seen a solvable distribution with size six in Figure \ref{ellenpelda}. It is not hard to see that the distance 2 domination number of $K_3\square K_3\square K_5$ is three: 

The support of the given distribution is a distance 2 domination set on three vertices. Two vertices are not enough. Consider a set $S$ whose size is two. The graph is vertex transitive, therefore it does not matter how we chose the first vertex $s_1$. In each copy of $K_3\square K_3$ which does not contain $s_1$ there are four undominated vertices whose distance from $s_1$ is more than two. The intersection of the closed neigbourhoods of the undominated vertices which are contained in the same $K_3\square K_3$ is empty. After we chose $s_2$ there will be a $K_3\square K_3$ which contains neither $s_1$ nor $s_2$. To reach its undominated vertices we have to move to a different $K_3\square K_3$, which consumes one of the two moves but our location in $K_3\square K_3$ does not change during this move. Only one more remained in $K_3\square K_3$, but this is not enough to arrive all four undominated vertices, because the intersection of their closed neighbourhoods is empty. Therefore $S$ is not a distance $2$ domination set. 
 
The domination number of $K_3\square K_3\square K_5$ is  more than 4: 

Consider a set of vertices $S$ whose size is $4$. The pigeonhole principle implies that there is a $K_3\square K_3$ which does not contain an element of $S$. Two vertex from the same $K_3\square K_3$ have some common adjacent vertices but all of them are contained in the same $K_3\square K_3$ where the two original vertices. Therefore each vertex in this $K_3\square K_3$ requires a different element of $S$ which dominates it. The order of $K_3\square K_3$ is nine, therefore $S$ is not a domination set.   

Finally we set $k$ to $3$ and apply Theorem \ref{uccso}. 
\end{proof}

\begin{figure}

\end{figure}

\section*{Acknowledgement}

The first and second authors are partially supported by the Hungarian National
Research Fund (grant number K116769).
The second and third authors are partially supported by the Hungarian National 
Research Fund (grant number K108947).

\end{document}